\newtheorem{theorem}{Theorem}
\colorlet{Changes@Color}{red}
\crefname{equation}{}{}
\newcommand*{\Exp}{\ensuremath{\mathrm{Exp}}}
\newcommand{\pr}[1]{\ensuremath{\mathbb{P}\left( #1 \right)}}
\newtheorem{proposition}{\noindent \bf Proposition}
\title{Condition-based maintenance at both scheduled and unscheduled opportunities}
\author[*$\dag$]{Sz. K\'alosi }
\author[*$\S$]{S. Kapodistria }
\author[*$\ddag$]{J. A. C. Resing }
\affil[*]{Department of Mathematics and Computer Science, Eindhoven University of Technology, The~Netherlands,}
\affil[$\dag$]{email:  \texttt{s.kalosi@tue.nl}, ${}^{\S}${email:    \texttt{s.kapodistria@tue.nl}}, 
${}^{\ddag}${email:  \texttt{j.a.c.resing@tue.nl}}}
\date{\vspace{-7ex}}
\begin{document}

\maketitle

\begin{abstract}
Motivated by original equipment manufacturer (OEM) service and maintenance practices we consider a single component subject to replacements at failure instances and two types of preventive maintenance opportunities: scheduled, which occur due to periodic system reviews of the equipment, and unscheduled, which occur due to failures of other components in the system.
Modelling the state of the component appropriately and incorporating a realistic cost structure for corrective maintenance as well as condition-based maintenance  (CBM), we derive the optimal CBM  policy. In particular, we show that the optimal long-run average cost policy for the model at hand is a control-limit policy, where the control limit depends on the time until the next scheduled opportunity. Furthermore, we explicitly calculate the long-run average cost for any given control-limit time dependent policy and compare various policies numerically.
\end{abstract}

\section{Introduction }
When planning CBM strategies, see, e.g., \cite{Jardine2006}, \cite{jardine2013maintenance}, \cite{prajapati2012condition}, the typical assumption in the literature is that the asset at hand is monitored continuously and one can intervene and repair the  asset at any given moment. However, in practice, especially for highly complicated high-tech systems, such  as off-shore wind turbines, baggage handling systems in  airports and interventional X-ray machines, downtimes are extremely costly and scheduling maintenance tasks is challenging, see, e.g., \cite{kobbacy2008complex}, \cite{manzini2010maintenance}. One solution suggested in the literature to overcome this problem is the planning of opportunistic maintenance. Most of the works on opportunistic maintenance consider only unscheduled opportunities, rarely treating the case of scheduled opportunities, see, e.g., \cite{articlewang2002survey} and the references therein. Different from existing research on opportunistic maintenance policies, we consider both scheduled and unscheduled opportunities for CBM. Furthermore, we consider distinct costs for maintenance based on the type of the opportunity. For a single-component model we derive the optimal long-run average cost CBM policy and calculate the corresponding long-run average cost. Thereafter, we compare it with the corresponding cost for only one type of opportunity. Our work can be viewed as a variation of the work of \cite{zhuAge} in the direction of CBM.

The paper is organised as follows. In  \Cref{sec:modform}, we present the model and describe the exact setting that motivated our work. In the subsequent two sections we prove the two main results: in  \Cref{sec:polstruct}, we derive the optimal policy and in  \Cref{sec:longruncost}, we calculate the long-run average cost. In  \Cref{sec:numstud}, we numerically compare the optimal replacement policy to other policies. Finally, in the last section, we present some concluding remarks and ideas on future research.

\section{Model formulation}\label{sec:modform}
We consider a single component that is monitored continuously and whose condition is fully observable. We assume that the condition of the component can only degrade over time and it can be classified as {\em perfect}, {\em satisfactory} and {\em failed}. This type of models are known in the literature as  delay time degradation models, see \cite[Chapter~14]{kobbacy2008complex}. We will refer to the state of perfect condition as state $2$, the state of satisfactory condition as state $1$ and the failure state as state $0$. Furthermore, we assume that as soon as a component failure occurs, the component is instantaneously replaced by an ``as good as new'' component. So, in the mathematical formulation of the model, we may assume, due to the instantaneous replacement at failure, that the model evolves between only  states $1$ and $2$. The component spends an exponential amount of time with rate $\mu_i$ in state $i$, $i\in\{1,2\}$. The above model formulation implies that initially the component starts in state $2$ (perfect state), then after an exponential amount of time with rate $\mu_2$, the component deteriorates and the condition of the component goes to state $1$ (satisfactory state). The component spends an exponential amount of time with rate $\mu_1$ in state 1, after which a failure occurs. At a failure the component is instantaneously replaced by an ``as good as new'' component and the condition is restored to $2$ (perfect state). A schematic evolution of the condition of the component and the corresponding times of transitions are depicted in Figure \ref{Fig:1}.

\begin{figure}[h]
\[\def\labelstyle{\scriptstyle}
 \xymatrix  @C+9pt@R+1pt{
&
2  \ar@/^.75pc/[r]^{\mathrm{Exp}(\mu_2)}&1\ar@/^.75pc/[r]^{\mathrm{Exp}(\mu_1)}&2  \ar@/^.75pc/[r]^{\mathrm{Exp}(\mu_2)}&\cdots\ \ \ \\
}
\]
\caption{Schematic evolution of the condition of the component and the corresponding times of transitions.\label{Fig:1}}
\end{figure}
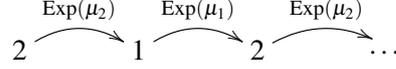

We assume that we have two types of opportunities in which we can preventively replace the component before failure: the  scheduled and the unscheduled opportunities. The scheduled opportunities correspond to pre-arranged opportunities occurring according to a fixed schedule. These opportunities can be attributed to either  service/maintenance agreements or to regulation imposition checks. We assume that the scheduled opportunities occur at epochs $\tau,2\tau,3\tau,\ldots$, with $\tau>0$. This is also in accordance with what happens in practice as maintenance actions once planned are typically not rescheduled. The unscheduled opportunities correspond to random opportunities triggered by failures of other unrelated components of the same system or failures of other systems in close proximity. We assume that these unscheduled opportunities occur according to a Poisson process at rate $\lambda$.

The unscheduled and scheduled opportunities, abbreviated by USO and SO, respectively, serve as opportunities for the monitored component to be replaced preventively. Such a preventive replacement is assumed to cost less than a corrective replacement upon failure, which costs say $c_c$. Moreover, incorporating a planning perspective, we may assume that the preventive replacement cost at a scheduled opportunity, say $c_p^{\text{so}}$, is less than or equal to the corresponding cost at an unscheduled opportunity, say $c_p^{\text{uso}}$, that is $0<c_p^{\text{so}}\le c_p^{\text{uso}}<c_c$.

Our aim is to determine a policy when to replace the component based on its condition and the opportunity type, i.e. scheduled or unscheduled. More explicitly, we will need to formally define the state space, which refers to the condition of the component, the action space and the decision epochs.
The state space is governed by the process depicting the condition of the component, i.e. the Markov chain evolving between the states $\{1,2\}$. The action space consists of only two actions: replace the component by an ``as good as new''  or do nothing. Lastly, the decision epochs are the epochs of the scheduled and unscheduled opportunities. In \Cref{Fig:2}, we depict SO by ($*$) and USO by (o).
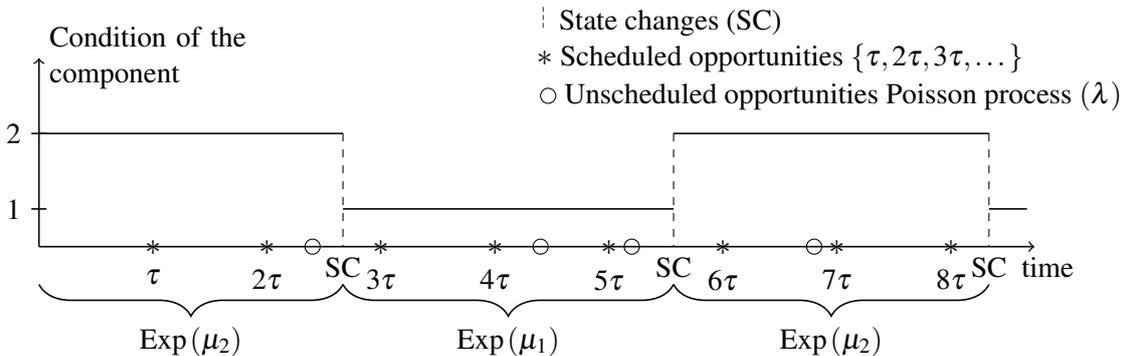
\begin{figure}[b!]
\begin{center}
		\begin{tikzpicture}
		\draw [semithick,->] (-6,0) -- (-6,2.5) node[right,text width=2.7cm]{Condition of the component};
		\draw [semithick] (-6.1 ,1.5) node[left]{2} -- (-5.9,1.5);
		\draw [semithick] (-6.1 ,.5) node[left]{1} -- (-5.9,.5);
		\draw [semithick,->] (-6,0) -- (7.1,0) node[below,xshift=1ex]{time};
		\draw [semithick] (-6,1.5) -- (-2,1.5);
		\draw  (-4.5,0) node[yshift=-.2pt]{\large $\ast$} node[below,yshift=-1ex]{ $\tau$};
		\draw  (-3,0) node[yshift=-.2pt]{\large $\ast$} node[below,yshift=-1ex]{ $2\tau$};
		\draw  (-2.4,0) circle (0.1);				
		\draw [semithick] (-2,.5) -- (2.35,.5);				
		\draw [dashed] (-2,1.5) -- (-2,0) node[below]{SC};								
		\draw  (-1.5,0) node[yshift=-.2pt]{\large $\ast$} node[below,yshift=-1ex]{ $3\tau$};
		\draw  (0,0) node[yshift=-.2pt]{\large $\ast$} node[below,yshift=-1ex]{ $4\tau$};
		\draw  (.6,0) circle (0.1);					
		\draw  (1.5,0) node[yshift=-.2pt]{\large $\ast$} node[below,yshift=-1ex]{ $5\tau$};				
		\draw  (1.8,0) circle (0.1);					
		\draw [dashed] (2.35,1.5) -- (2.35,0) node[below]{SC};				
		\draw [semithick] (2.35,1.5) -- (6.5,1.5);						
		\draw  (3,0) node[yshift=-.2pt]{\large $\ast$} node[below,yshift=-1ex]{ $6\tau$};		
		\draw  (4.2,0) circle (0.1);							
		\draw  (4.5,0) node[yshift=-.2pt]{\large $\ast$} node[below,yshift=-1ex]{ $7\tau$};				
		\draw  (6,0) node[yshift=-.2pt]{\large $\ast$} node[below,yshift=-1ex]{ $8\tau$};				
		\draw [dashed] (6.5,1.5) -- (6.5,0) node[below]{SC};	
		\draw [semithick] (6.5,.5) -- (7,.5);
		
		\draw[semithick,decorate,decoration={brace,amplitude=12pt,mirror}] 
		(-6,-.5) -- (-2,-.5)  node [midway,below,yshift=-12pt]  {$\Exp\del{\mu_2}$};
		\draw[semithick,decorate,decoration={brace,amplitude=12pt,mirror}] 
		(-2,-.5) -- (2.35,-.5)  node [midway,below,yshift=-12pt]  {$\Exp\del{\mu_1}$};				
		\draw[semithick,decorate,decoration={brace,amplitude=12pt,mirror}] 
		(2.35,-.5) -- (6.5,-.5)  node [midway,below,yshift=-12pt]  {$\Exp\del{\mu_2}$};
		\node at (3.75,2.5) {{\large{$\ast$}} Scheduled opportunities $\cbr{\tau,2\tau,3\tau,\dots}$};
		\draw  (.7,2) circle (0.1) node[right,xshift=.45ex]{Unscheduled opportunities Poisson process $(\lambda)$};					
		\node at (2.2,3) {{\large{\rotatebox{90}{\Kutline}}} State changes (SC)};
		\end{tikzpicture}
\end{center}
\caption{A sample path of the model.\label{Fig:2}}
\end{figure}

\section{Derivation of the optimal policy}\label{sec:polstruct}
This section is devoted to the proof of our first main result on the optimal policy on when to replace the component. To this purpose, we set up our problem as a Markov Decision Problem (MDP). Due to the stochastic nature of the problem, it does not suffice to know the type of the decision epoch (scheduled or unscheduled opportunity), but it is also required to keep track of the remainder time till the next scheduled opportunity. The remaining time until the next  scheduled opportunity may impact our decision, i.e. the optimal policy may depend on the residual time till the next scheduled opportunity. Thus, for the full description of the condition (state) of the component, we will use a triplet descriptor
$\mathcal{S}=\left\{(i,j,t):\ i\in \{1,2\}, \ j\in\{\text{SC},\text{SO},\text{USO}\},\ t\in[0,\tau)\right\}$, \label{page-ref}
where $i$ indicates the condition of the component. If $j=\text{SC}$, then this means that the condition of the system is about to change and there is no decision associated with this epoch, while if $j=\text{SO}$ or $j=\text{USO}$, this means that this is a decision moment of the type of a scheduled or unscheduled opportunity. Finally, the third element indicates the remaining time until the scheduled opportunity. Note that if  $j=\text{SO}$ then $t=0$. The introduction of the time in the full description of the condition of the component renders the model inhomogeneous, and for this reason we  use techniques that stem from semi-Markov Decision Problems (SMDP). \\

\subsection{Equal preventive costs}
In case of equal costs at scheduled and unscheduled opportunities,  the long-run optimal average cost policy does not depend on time. In particular, the optimal policy for state $2$ is to do nothing and for state $1$ the optimal policy depends on the model parameters.

\begin{theorem}\label{thm:opt1}
Under the assumption that $c_p^{\text{so}}=c_p^{\text{uso}}=c_p>0$,  the long-run optimal average cost policy is: For state $2$ to do nothing. For state $1$ to replace (at both scheduled and unscheduled opportunities) if $\del{\mu_1+\mu_2}c_p< \mu_1 c_c$, and to do nothing otherwise.
\end{theorem}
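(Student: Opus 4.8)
The plan is to set up the problem as a semi-Markov decision process (SMDP) and exploit the fact that, when the two preventive costs coincide, the time-to-next-scheduled-opportunity coordinate becomes irrelevant to the decision rule. Concretely, one observes that at every decision epoch — whether an SO or a USO — the available actions and their immediate costs are identical (replace for cost $c_p$, or do nothing), and the only event that ever forces a cost $c_c$ is a failure in state $1$. Hence one may collapse the triplet descriptor $\mathcal{S}$ to the two-state chain on $\{1,2\}$ with a single ``opportunity stream'' of replacement chances arriving at rate $\lambda$ superposed with the periodic stream at $\tau,2\tau,\dots$; since all these chances are interchangeable, the superposition can be analysed as a renewal-reward problem. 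The first step, then, is to argue rigorously that a stationary, time-homogeneous policy is optimal — this follows from standard SMDP theory (e.g.\ the existence of an optimal stationary policy for the long-run average cost criterion under the present finite action space and the obviously ergodic dynamics), together with a symmetry/indifference argument showing the optimal action cannot depend on $t$ when $c_p^{\text{so}}=c_p^{\text{uso}}$.

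Next I would reduce the policy space. For state $2$ it is intuitively clear — and should be made precise via an interchange argument — that replacing is never beneficial: replacing a perfect component merely restarts the identical cycle while incurring a positive cost $c_p$, so ``do nothing in state $2$'' weakly dominates. That leaves a one-parameter family of candidate policies indexed by the decision in state $1$: either ``always replace at the first opportunity after entering state $1$'' (call it policy $R$) or ``never replace, wait for failure'' (policy $N$). Mixed or threshold-in-$t$ policies are excluded by the time-homogeneity established in the first step, but to be safe one can also note any such policy is a randomisation between $R$ and $N$ per cycle and hence cannot beat the better of the two. The problem thus reduces to comparing two explicit long-run average costs $g_R$ and $g_N$.

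The third step is the renewal-reward computation for each of the two policies. Define a cycle as the time between successive returns to state $2$. Under policy $N$, a cycle consists of an $\Exp(\mu_2)$ sojourn in state $2$ followed by an $\Exp(\mu_1)$ sojourn in state $1$ ending in a failure; expected cycle length is $1/\mu_2 + 1/\mu_1$ and expected cost is $c_c$, giving $g_N = c_c/(1/\mu_1+1/\mu_2) = \mu_1\mu_2 c_c/(\mu_1+\mu_2)$. Under policy $R$, a cycle is the $\Exp(\mu_2)$ sojourn in state $2$, then in state $1$ we race the $\Exp(\mu_1)$ failure clock against the first opportunity; but here the opportunity stream as seen from the (random) entry time into state $1$ is \emph{not} simply Poisson because of the periodic SO's — this is the one genuinely delicate point. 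The clean way around it is to note that the relevant quantity for the renewal-reward ratio $g_R$ is only the long-run rate of failures versus preventive replacements, and by the renewal-reward theorem applied over a long horizon $[0,n\tau]$ the periodic opportunities contribute, in the limit, in a way that lets one condition on ``which clock rings first'' using only the competing exponential rates $\mu_1$ (failure) and an effective opportunity rate; more carefully, one shows that whenever the component is in state $1$, a preventive replacement occurs before failure with probability that, after the renewal-reward averaging, reduces the comparison to $(\mu_1+\mu_2)c_p$ versus $\mu_1 c_c$. I would make this precise by writing the average cost as (expected cost per cycle)/(expected length per cycle), handling the state-$1$ portion by a direct competing-risks argument and verifying that the $\tau$-periodic structure washes out in the long-run average — this washing-out is exactly where I expect the main work to lie, and it is presumably why the authors invoke SMDP machinery rather than an elementary renewal argument.

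Finally, with $g_R$ and $g_N$ in hand (both as closed forms in $\mu_1,\mu_2,c_p,c_c$), the theorem follows by comparing them: the inequality $g_R < g_N$ should simplify exactly to $(\mu_1+\mu_2)c_p < \mu_1 c_c$, and $g_R \ge g_N$ to the reverse, which gives the stated dichotomy (replace in state $1$ iff $(\mu_1+\mu_2)c_p<\mu_1 c_c$, do nothing otherwise). The main obstacle, to repeat, is not the algebra but justifying that the periodic scheduled opportunities do not introduce a genuine time-dependence into the optimal rule — i.e.\ proving that the optimal policy is time-homogeneous despite the inhomogeneous state descriptor — and then correctly accounting for the superposed periodic-plus-Poisson opportunity stream in the state-$1$ sojourn; everything else is a routine renewal-reward ratio computation and a one-line comparison.
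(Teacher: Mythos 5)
There is a genuine gap, and it sits exactly where you locate ``the main obstacle'': you never actually prove that the optimal policy is time-homogeneous, you only assert it. The ``symmetry/indifference'' argument does not work: even when $c_p^{\text{so}}=c_p^{\text{uso}}=c_p$, the scheduled and unscheduled opportunities are \emph{not} interchangeable, because the continuation value of ``do nothing'' at a USO depends on the residual time $t$ to the next SO (a component left alone just before an SO is guaranteed another replacement chance almost immediately; one left alone just after an SO is not). In the paper's notation, the decision at a USO compares $F_1(t)-F_2(t)$ with $c_p$, and $F_1(t)-F_2(t)$ is a genuinely $t$-dependent function; the entire content of the paper's proof is to derive the ODE $F_1'(t)-F_2'(t)=-(\mu_1+\mu_2+\lambda)\del{F_1(t)-F_2(t)}+\lambda c_p+\mu_1 c_c$ on the region where replacement is optimal, show $F_1-F_2$ is monotone there, and run a contradiction argument to conclude that the crossing point $\tau-\epsilon$ must equal $0$, i.e.\ that the threshold degenerates. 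Your proposal replaces this with the claim that any threshold-in-$t$ policy ``is a randomisation between $R$ and $N$ per cycle and hence cannot beat the better of the two.'' That claim is false as a general principle: the long-run average cost of a threshold policy is a ratio of expectations over cycles whose \emph{dynamics} change with the threshold, not a convex combination of $g_R$ and $g_N$. Indeed, Theorem 2 of the paper shows that when $c_p^{\text{so}}<c_p^{\text{uso}}$ a nontrivial threshold policy is strictly optimal, so the class you are dismissing is genuinely richer than $\{R,N\}$; that it collapses to $\{R,N\}$ under equal costs is precisely what must be proved.

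The remainder of your plan is sound but incomplete. Granting the reduction to the two stationary policies, the comparison $g_R<g_N$ does simplify to $(\mu_1+\mu_2)c_p<\mu_1 c_c$ (this is consistent with the paper's special cases: both the SO-only and the USO-only average costs beat $c_c\mu_1\mu_2/(\mu_1+\mu_2)$ exactly when $(\mu_1+\mu_2)c_p<\mu_1 c_c$, and the general formula of Theorem 3 with $\tilde t\to 0$ gives $g_R$). But note that the periodic structure does \emph{not} ``wash out'' of $g_R$ itself --- the closed form retains $\tau$-dependence --- it only washes out of the \emph{sign} of $g_R-g_N$; and computing $g_R$ for the superposed periodic-plus-Poisson stream requires solving the piecewise forward equations as in the paper's Theorem 3, which you defer. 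So the proposal is a plausible alternative route (policy-space reduction plus renewal--reward comparison, versus the paper's Bellman-equation/ODE analysis), but as written it begs the theorem's central question and rests one step on an incorrect randomisation argument.
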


For the proof of Theorems \ref{thm:opt1} and \ref{thm:opt2}, we derive the policy that minimises the long-run average cost per time unit. To this purpose, we state the Bellman optimality equations for the decision process with action space $\mathcal{A}=\{\text{replace, do nothing}\}$, see, e.g., \cite{Puterman},  \cite{ross1970average}.

\begin{proposition}[Puterman, Section 11.4.]\label{thm:Puterman}
For a decision process with an embedded  unichain, there exists a scalar $g$ and a value function $V(\cdot)$ satisfying the system of equations
\begin{align}V(i,j,t)=\min_{a\in \mathcal{A}} \Big \{ c(i,j,t;a)-g \,s(i,j,t;a) +\sum_{k,l}\int_y p_{i,j,k,l}(t,y;a)V(k,l,y)\mathrm{d}y\Big\}, (i,j,t)\in \mathcal{S},\label{Bellman}
\end{align}
where $s(i,j,t;a)$ is the expected sojourn time of the SMDP in state $(i,j,t)$ with transition probabilities $p_{i,j,k,l}(t,y;a)$ under action $a\in \mathcal{A}$, and $c(i,j,t;a)$ is the stationary total expected cost of the SMDP in state $(i,j,t)$ between two consecutive decision epochs under action $a\in \mathcal{A}$. The existence of such a scalar $g$ ensures the existence of a policy that minimises the long-run average cost.
\end{proposition}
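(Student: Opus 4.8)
The plan is to realise $(g,V)$ as the vanishing-discount limit of discounted-cost quantities, which is the standard route to the average-cost optimality equation \eqref{Bellman} for a semi-Markov decision process. First I would fix a discount rate $\alpha>0$ and write down the $\alpha$-discounted SMDP optimality equation: denoting by $T$ the (random) sojourn time until the next decision epoch, by $X_1$ the state reached there, and by $\mathrm{d}C(u)$ the cost accrued under action $a$,
\begin{align}
V_\alpha(x)=\min_{a\in\mathcal{A}}\mathbb{E}_x^a\Big[\int_0^{T}e^{-\alpha u}\,\mathrm{d}C(u)+e^{-\alpha T}V_\alpha(X_1)\Big],\qquad x\in\mathcal{S}.\label{disc}
\end{align}
Because every sample path meets a scheduled opportunity within each interval of length $\tau$, the discount accumulated over one inter-review period is at most $e^{-\alpha\tau}<1$; hence a suitable iterate of the right-hand side of \eqref{disc} is a contraction on bounded functions, and the Banach fixed-point theorem delivers a unique bounded $V_\alpha$.

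The crux of the argument is to control the relative value functions uniformly in $\alpha$. I would fix the natural reference state $x_0$ --- perfect condition immediately after a replacement or a scheduled review --- and set $h_\alpha(x)=V_\alpha(x)-V_\alpha(x_0)$. The embedded unichain hypothesis is exactly what makes the next step work: from every state the embedded chain reaches $x_0$ in finite expected time, and since each ``as good as new'' replacement is a regeneration epoch, $h_\alpha(x)$ is bounded by the expected cost accrued along a first-passage path from $x$ to $x_0$, a bound that is uniform in $\alpha$. The same recurrence shows that $\alpha V_\alpha(x_0)$ stays bounded.

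With these bounds I would pass to the limit $\alpha\downarrow 0$. Substituting $V_\alpha=V_\alpha(x_0)+h_\alpha$ into \eqref{disc}, using $\mathbb{E}_x^a[e^{-\alpha T}]=1-\alpha\,s(i,j,t;a)+o(\alpha)$ with $s(i,j,t;a)=\mathbb{E}_x^a[T]$, and noting that the discounted one-decision cost converges to $c(i,j,t;a)$, I extract along a sequence $\alpha_n\downarrow 0$ the limits $g=\lim_n\alpha_n V_{\alpha_n}(x_0)$ and $V=\lim_n h_{\alpha_n}$. Passing to the limit in the so-transformed equation turns \eqref{disc} into exactly \eqref{Bellman}, with the stated cost, sojourn time and transition kernel $p_{i,j,k,l}(t,y;a)$. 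For the concluding verification claim that $g$ is the optimal long-run average cost, I would take any stationary policy, apply the inequality form of \eqref{Bellman} over $n$ successive decision epochs, telescope the $V$-terms, and divide by the elapsed time; boundedness of $V$ renders the telescoped boundary terms negligible against the linearly growing elapsed time, yielding $g\le(\text{average cost of the policy})$, with equality for the policy attaining the minimum in \eqref{Bellman}.

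The step I expect to be the main obstacle is the uniform boundedness of $\{h_\alpha\}$, and more precisely its upgrade to a form of compactness that legitimises the limit. Because $\mathcal{S}$ carries the continuous residual-time coordinate $t\in[0,\tau)$, the plain finite-state unichain bound does not transfer verbatim: one must establish equicontinuity of $V_\alpha(i,j,\cdot)$ in $t$ so that an Arzel\`a--Ascoli-type argument produces a convergent subsequence whose limit $V(i,j,\cdot)$ is itself regular in $t$. Here I would exploit the explicit dynamics --- the residual time decreases deterministically at unit rate between opportunities and is reset to $\tau$ at each scheduled epoch, while the interior transitions are governed by the rates $\mu_1,\mu_2$ and $\lambda$ --- to derive the needed regularity of $V_\alpha$ in $t$ and thereby secure the vanishing-discount limit.
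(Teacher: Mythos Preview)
The paper does not prove this proposition. It is stated as a citation from Puterman, Section~11.4 (see also the reference to \cite{ross1970average}), and is used as a black-box tool: the subsequent proofs of Theorems~\ref{thm:opt1} and~\ref{thm:opt2} simply instantiate the optimality equations \eqref{Bellman} for the specific model and analyse them. There is therefore no proof in the paper to compare your proposal against.

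For what it is worth, your vanishing-discount outline is the standard derivation of the average-cost optimality equation for semi-Markov decision processes and is essentially the argument behind the cited results. Your identification of the main obstacle is also on target: because the state descriptor carries the continuous coordinate $t\in[0,\tau)$, the classical countable-state unichain bound in Puterman's Section~11.4 does not apply verbatim, and one needs equicontinuity of the relative value functions $h_\alpha(i,j,\cdot)$ in $t$ to run an Arzel\`a--Ascoli extraction. In the present model this is obtainable, since the $t$-dependence enters only through factors of the form $e^{-(\mu_i+\lambda)t}$ (compare the representation \eqref{eq:fis}), which yields a uniform Lipschitz bound. Strictly speaking, then, the paper's invocation of Puterman already presupposes an adaptation to this hybrid discrete--continuous state space, a point the paper does not dwell on.
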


\noindent \begin{proof}[Proof of Theorem \ref{thm:opt1}]
The Bellman optimality equations \cref{Bellman} for the model at hand become:
\begin{align}
V(2,\text{SC},t) =& 0-g \int_{0}^{t} e^{-(\mu_1+\lambda)x}\dif x+ V(1,\text{SO},0)\int_{t}^{\infty}(\mu_1+\lambda)e^{-(\mu_1+\lambda)x}\dif  x \nonumber\\
&+ \int_{0}^{t} \left(\frac{\mu_1}{\mu_1+\lambda}V(1,\text{SC},t-x)+\frac{\lambda}{\mu_1+\lambda}V(1,\text{USO},t-x)\right) (\mu_1+\lambda)e^{-(\mu_1+\lambda)x}\dif x \label{eq:V2sca}\\
=& e^{-\del{\mu_1+\lambda} t} \del{ \int_{0}^{t}  \del{\mu_1 V(1,\text{SC},y)+\lambda V(1,\text{USO},y) - g}e^{\del{\mu_1+\lambda}y} \dif y + V(1,\text{SO},0)},\label{eq:V2sc}\\
V(1,\text{SC},t) =& c_c + e^{-\del{\mu_2+\lambda} t} \del{ \int_{0}^{t}  \del{\mu_2 V(2,\text{SC},y)+\lambda V(2,\text{USO},y) - g}e^{\del{\mu_2+\lambda}y} \dif y + V(2,\text{SO},0)},\label{eq:V1sc}\\
V(i,\text{USO},t) =& \min\left\{
c_p^{\text{uso}}+e^{-\del{\mu_2+\lambda} t} \del{ \int_{0}^{t}  \del{\mu_2 V(2,\text{SC},y)+\lambda V(2,\text{USO},y) - g}e^{\del{\mu_2+\lambda}y} \dif y + V(2,\text{SO},0)}
;
\right.\nonumber\\
&\left.\qquad
e^{-\del{\mu_i+\lambda} t} \del{ \int_{0}^{t}  \del{\mu_i V(i,\text{SC},y)+\lambda V(i,\text{USO},y) - g}e^{\del{\mu_i+\lambda}y} \dif y + V(i,\text{SO},0)}
\right\},\ i=1,2,\label{eq:USO}
\end{align}
\begin{align}
V(i,\text{SO},0) =& \min\left\{
c_p^{\text{so}}+
e^{-\del{\mu_2+\lambda} \tau} \del{\int_{0}^{\tau} \del{\mu_2 V(2,\text{SC},y)+\lambda V(2,\text{USO},y) - g}e^{\del{\mu_2+\lambda}y} \dif y + V(2,\text{SO},0)}
\right.;\nonumber\\
&\left.\qquad {}
e^{-\del{\mu_i+\lambda} \tau} \del{\int_{0}^{\tau} \del{\mu_i V(i,\text{SC},y)+\lambda V(i,\text{USO},y) - g}e^{\del{\mu_i+\lambda}y} \dif y + V(i,\text{SO},0)}
\right\},\ i=1,2.
\label{eq:SO}
\end{align}
In this paragraph we explain in detail how Equation \eqref{eq:V2sca} is obtained.
State $(2,\text{SC},t)$, as explained before, see page \pageref{page-ref}, is not associated with any decision. Therefore, there is no minimum operator appearing on the right hand side of the equation and the corresponding cost, $c(2,\text{SC},t;-)$, is  equal to zero. For the other terms appearing on the right hand side of equation \eqref{eq:V2sca}, it suffices to note that there are three possible evolutions in terms of the state of the system either a SO or a SC or  an USO, where the time till the next SO is equal to $t$, while the time till the SC and USO are exponentially distributed with rates $\mu_1$ and $\lambda$, respectively.   In particular, the expected sojourn time of the SMDP in state $(2,\text{SC},t)$ can be calculated as the minimum of a deterministic time $t$ and two exponentially distributed times, which can be easily verified to be equal to $\int_{0}^{t} e^{-(\mu_1+\lambda)x}\dif x$. 
The set of Equations \cref{eq:V1sc}--\cref{eq:SO} was obtained using very similar arguments. \\
Note that in \cref{eq:USO,eq:SO}, inside the minimum, the left terms correspond to the action ``replace'', while the right terms  correspond to the action ``do nothing''. Furthermore, for $i=2$,  \cref{eq:USO,eq:SO} yield that it is never optimal to replace. In order to decide if it is optimal or not to replace the component in state $i=1$ at scheduled and unscheduled opportunities, cf. \cref{eq:USO,eq:SO}, we define the following auxiliary functions, for $i=1,2,$
\begin{equation}\label{eq:fis}
F_i(t) = e^{-\del{\mu_{i}+\lambda} t} \del{ \int_{0}^{t}  \del{\mu_i V(i,\text{SC},y)+\lambda V(i,\text{USO},y) - g}e^{\del{\mu_i+\lambda}y} \dif y
+ V(i,\text{SO},0)}, \ t\in [0,\tau],
\end{equation}
and rewrite \cref{eq:V2sc,eq:V1sc,eq:USO,eq:SO}, for $t\in[0,\tau)$,
\begin{align}
V(1,\text{SC},t) =& c_c + F_2(t),\quad V(2,\text{SC},t) = F_1(t), \label{V2SCt}\\
V(i,\text{USO},t) =& \min\cbr{F_i(t),c_p^{\text{uso}}+F_2(t)}, \ i=1,2,\label{ViUSOt}\\
V(i,\text{SO},0) =& \min\cbr{F_i(\tau),c_p^{\text{so}}+F_2(\tau)},\ i=1,2 .\label{ViSOt}
\end{align}
From this point onward in the proof, we will use that $c_p^{\text{uso}}=c_p^{\text{so}}=c_p$. At a scheduled opportunity, we can either replace a component in state $1$ or do nothing. We first assume that we replace the component at a scheduled opportunity, i.e.
$
F_1(\tau) > c_p+F_2(\tau),
$
thus, Equation \cref{ViSOt} for $i=1$ implies that
$V(1,\text{SO},0)=c_p+F_2(\tau)$.
Furthermore, for $t=0$, Equation \cref{eq:fis} yields $F_1(0)=V(1,\text{SO},0)$ and $F_2(0)=V(2,\text{SO},0)\stackrel{\eqref{ViSOt}}{=}F_2(\tau)$. Combining the last two results yields
\begin{align}\label{asumption1}
F_1(0)-F_2(0)=c_p < F_1(\tau)-F_2(\tau).
\end{align}
Since the functions $F_i(t)$ are continuous functions in $t\in[0,\tau]$ and their difference at $t=\tau$ is greater than
$c_p$ at $\tau$, cf. Equation \cref{asumption1}, there exists an $\varepsilon>0$ such that
\begin{align}\label{asumption2}
F_1(t)-F_2(t) \geq  c_p,\ t\in(\tau-\epsilon,\tau].
\end{align}
Taking the derivative in \cref{eq:fis} and after straightforward manipulations, using  \cref{V2SCt}--\Cref{ViSOt} together with  \Cref{asumption2},
 yields
\begin{align}
F_1'(t)-F_2'(t)=&-(\mu_1+\lambda)F_1(t)+\mu_1 V(1,\text{SC},t)+\lambda V(1,\text{USO},t)\nonumber\\
& +(\mu_2+\lambda)F_2(t)-\mu_2 V(2,\text{SC},t)-\lambda V(2,\text{USO},t)\nonumber\\
=&
-(\mu_1+\mu_2+\lambda )(F_1(t)-F_2(t)) +\lambda c_p+\mu_1 c_c,\ t\in(\tau-\epsilon,\tau].
\label{eq:difeqs}
\end{align}
The solution to the above differential equation  reads
\begin{align}
F_1(t)-F_2(t) =& (F_1(\tau)-F_2(\tau))e^{(\mu_1+\mu_2+\lambda)(\tau-t)}-(\lambda c_p+\mu_1 c_c)\int_t^\tau e^{(\mu_1+\mu_2+\lambda)(x-t)}\dif x \nonumber\\
=&\frac{\lambda c_p+\mu_1 c_c}{\mu_1+\mu_2+\lambda}+\left(F_1(\tau)-F_2(\tau)-\frac{\lambda c_p+\mu_1 c_c}{\mu_1+\mu_2+\lambda}\right)e^{(\mu_1+\mu_2+\lambda)(\tau-t)},\ t\in(\tau-\epsilon,\tau].
\label{solution:dif}
\end{align}
Note that, for $t\in(\tau-\epsilon,\tau]$ the function $F_1(t)-F_2(t)$ is monotone. Moreover, if $F_1(\tau)-F_2(\tau)-\frac{\lambda c_p+\mu_1 c_c}{\mu_1+\mu_2+\lambda}\geq 0$ or equivalently if $F_1(t)-F_2(t)$ is non-increasing,  we can extend the above approach throughout the interval $[0,\tau]$ and show that the solution of the differential equation \Cref{solution:dif} is valid in the entire interval $[0,\tau]$, but that would contradict assumption \Cref{asumption1}. Hence, for $t\in(\tau-\epsilon,\tau]$ the function $F_1(t)-F_2(t)$ is increasing and
\begin{align}\label{condition:1}
c_p < F_1(\tau)-F_2(\tau) < \frac{\lambda c_p+\mu_1 c_c}{\mu_1+\mu_2+\lambda}.
\end{align}
\begin{figure}[h!]
\psfrag{a}[B][c][1][90]{$F_1(t)-F_2(t)$}
\psfrag{b}[r][r]{$c_p=c_p^{\text{so}}=c_p^{\text{uso}}$}
\psfrag{c}[c][c]{$\tau-\varepsilon$}
\psfrag{d}[c][c]{$\tau$}
\psfrag{t}[c][c]{$t$}
\centerline{\includegraphics[width=0.8\textwidth]{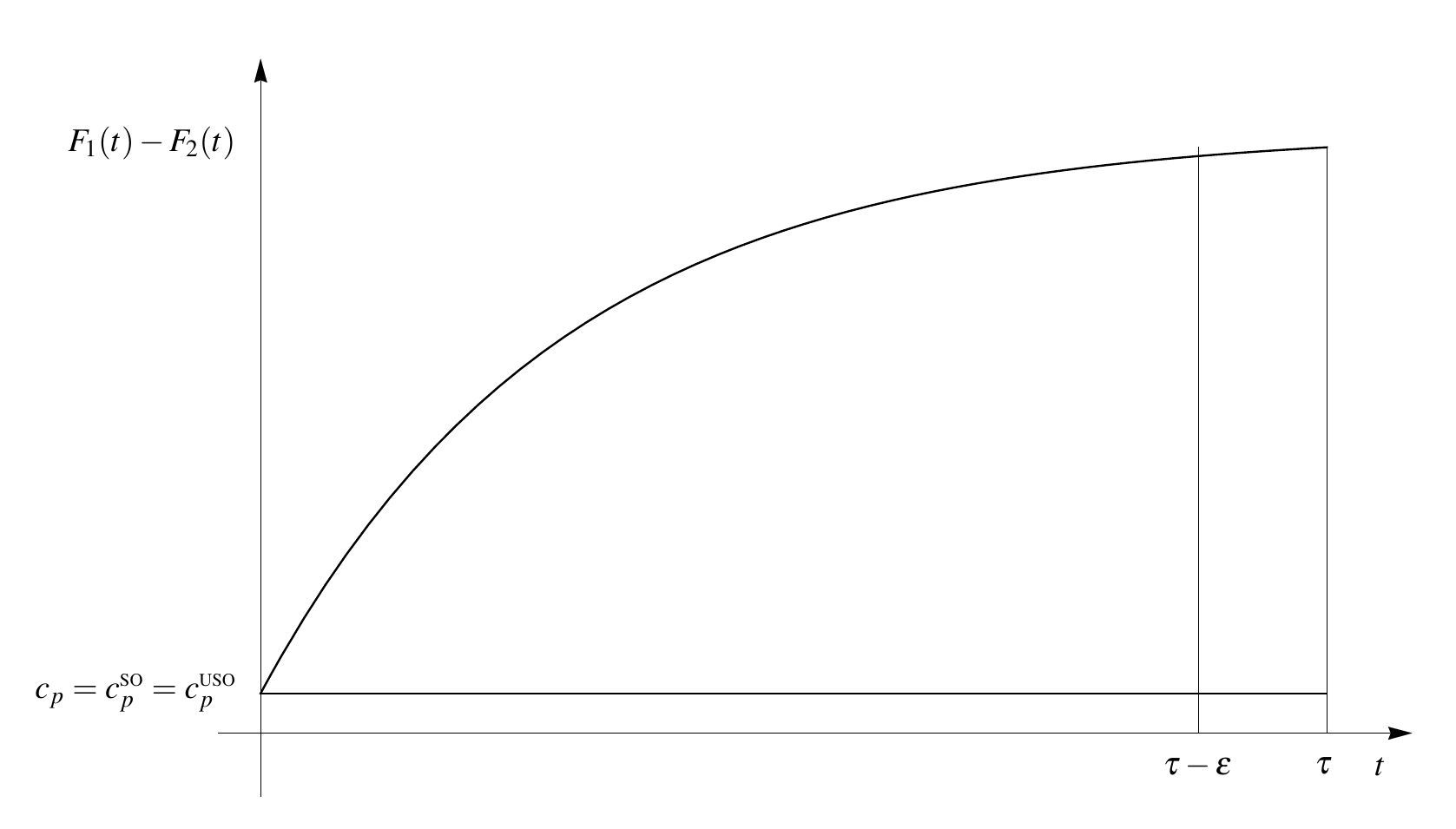}}
\caption{The case of replacing a component at scheduled and unscheduled opportunities in $[0,\tau)$.}\label{Fig:3}
\end{figure}

\noindent Since the function $F_1(t)-F_2(t)$ is increasing for $t\in(\tau-\epsilon,\tau]$, we now need to identify the $\epsilon$, more concretely the point $t=\tau-\epsilon\geq 0$ such that $F_1(t)-F_2(t)= c_p$. If $t=\tau-\epsilon$ is strictly positive this contradicts assumption \Cref{asumption1}, yielding $\epsilon=\tau$. All in all, if the policy at a scheduled opportunity is to replace the component in state 1, then the optimal long-run average cost policy at an unscheduled opportunity is to replace in state $1$ if $\del{\mu_1+\mu_2}c_p< \mu_1 c_c$, cf. \Cref{condition:1}. A schematic representation of the function $F_1(t)-F_2(t)$ is depicted in Figure \ref{Fig:3}.

Similarly to the case that we replace the component at a scheduled opportunity, we may assume that we do not replace the component at a scheduled opportunity and prove that then, it is also optimal not to replace the component at an unscheduled opportunity if $\del{\mu_1+\mu_2}c_p> \mu_1 c_c$. At the equality $\del{\mu_1+\mu_2}c_p =  \mu_1 c_c$, we are cost-wise indifferent between the two actions.
\end{proof}
\medskip

\subsection{Different preventive costs}
 In case of different preventive costs at scheduled and unscheduled opportunities, the optimal policy, in certain cases, becomes time-dependent, i.e. there exists a threshold $t^*$ such that it is optimal to replace if the residual time until the next scheduled opportunity is greater than or equal to the threshold and do nothing otherwise.

\begin{theorem}\label{thm:opt2}
Under the assumption that $c_p^{\text{so}}< c_p^{\text{uso}}$,  the optimal long-run average cost policy is: For state $2$ to do nothing. For state $1$ to replace at scheduled opportunities and at unscheduled opportunities occurring in $[t^*,\tau)$, with $ t^*=\max\left\{0,
\frac{1}{\mu_1+\mu_2}\ln\del{\frac{\del{\mu_1+\mu_2}c_p^{\text{so}}-\mu_1 c_c}{\del{\mu_1+\mu_2}c_p^{\text{uso}}-\mu_1 c_c}}\right\}$,  if $\del{\mu_1+\mu_2}c_p^{\text{so}}<\mu_1 c_c$, and to do nothing otherwise.
\end{theorem}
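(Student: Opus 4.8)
The plan is to follow exactly the scaffolding laid out in the proof of \Cref{thm:opt1}, but now keeping the two preventive costs distinct so that the constant term $c_p$ in the inequality $F_1(0)-F_2(0)=c_p$ is replaced by $c_p^{\text{so}}$ while the comparison function inside the unscheduled-opportunity minimum uses $c_p^{\text{uso}}$. Concretely, I would first record that, exactly as before, for state $2$ the minima in \cref{ViUSOt,ViSOt} are always attained by the ``do nothing'' branch, so the optimal policy in state $2$ is to do nothing and $V(2,\text{USO},t)=F_2(t)$, $V(2,\text{SO},0)=F_2(\tau)$. Then I would split into the two cases according to whether at a scheduled opportunity it is optimal to replace the component in state $1$.

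Assume first that it is optimal to replace at a scheduled opportunity, i.e. $F_1(\tau)>c_p^{\text{so}}+F_2(\tau)$, so that $V(1,\text{SO},0)=c_p^{\text{so}}+F_2(\tau)$. As in \Cref{thm:opt1}, evaluating \cref{eq:fis} at $t=0$ gives $F_1(0)=V(1,\text{SO},0)=c_p^{\text{so}}+F_2(\tau)$ and $F_2(0)=F_2(\tau)$, hence $F_1(0)-F_2(0)=c_p^{\text{so}}$. Near $t=\tau$ the relevant branch in \cref{ViUSOt} for $i=1$ is again the ``replace'' branch $c_p^{\text{uso}}+F_2(t)$ provided $F_1(t)-F_2(t)\ge c_p^{\text{uso}}$ there; but one has to be a little careful, because now one needs $F_1(\tau)-F_2(\tau)\ge c_p^{\text{uso}}$ rather than just $>c_p^{\text{so}}$. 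I would handle this by working on the maximal subinterval $(t_0,\tau]$ where $F_1(t)-F_2(t)\ge c_p^{\text{uso}}$ (if nonempty) and, where $F_1(t)-F_2(t)<c_p^{\text{uso}}$, use instead the ``do nothing'' branch $V(1,\text{USO},t)=F_1(t)$, which changes the differential equation \cref{eq:difeqs} by removing the $\lambda$-terms. On the ``replace'' region the same computation as in \cref{eq:difeqs} gives
\begin{align}
F_1'(t)-F_2'(t)=-(\mu_1+\mu_2+\lambda)(F_1(t)-F_2(t))+\lambda c_p^{\text{uso}}+\mu_1 c_c,\nonumber
\end{align}
while on the ``do nothing'' region the $\lambda(F_1-F_2)$ and $\lambda c_p^{\text{uso}}$ terms drop out and one gets $F_1'(t)-F_2'(t)=-(\mu_1+\mu_2)(F_1(t)-F_2(t))+\mu_1 c_c$. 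Solving each linear ODE and pasting the solutions continuously at the switching point, then imposing the boundary condition $F_1(0)-F_2(0)=c_p^{\text{so}}$ and the consistency condition $F_1(\tau)-F_2(\tau)>c_p^{\text{so}}$ from \cref{asumption1}, pins down the switching time. The ``do nothing'' ODE has equilibrium $\mu_1 c_c/(\mu_1+\mu_2)$, so if $(\mu_1+\mu_2)c_p^{\text{so}}<\mu_1 c_c$ the solution started from $c_p^{\text{so}}$ below equilibrium increases and crosses the level $c_p^{\text{uso}}$ precisely at the time obtained by solving $c_p^{\text{uso}}=\frac{\mu_1 c_c}{\mu_1+\mu_2}+\big(c_p^{\text{so}}-\frac{\mu_1 c_c}{\mu_1+\mu_2}\big)e^{(\mu_1+\mu_2)t}$, i.e. at $t^*=\frac{1}{\mu_1+\mu_2}\ln\!\big(\frac{(\mu_1+\mu_2)c_p^{\text{so}}-\mu_1 c_c}{(\mu_1+\mu_2)c_p^{\text{uso}}-\mu_1 c_c}\big)$, with the convention $t^*=0$ if this quantity is negative (which happens exactly when $c_p^{\text{so}}$ is already above $c_p^{\text{uso}}$ cannot occur, or more precisely when the logarithm argument is $\le 1$). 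On $(t^*,\tau]$ the ``replace'' ODE keeps $F_1(t)-F_2(t)$ above $c_p^{\text{uso}}$ by the same monotonicity/contradiction argument used for \cref{condition:1}. This yields that it is optimal to replace at unscheduled opportunities precisely on $[t^*,\tau)$.

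For the complementary case, assume it is not optimal to replace at a scheduled opportunity; then, mirroring the last paragraph of the proof of \Cref{thm:opt1}, one shows $F_1(t)-F_2(t)$ stays below $c_p^{\text{uso}}$ throughout $[0,\tau)$, so replacing is never optimal at an unscheduled opportunity either, and this regime corresponds to $(\mu_1+\mu_2)c_p^{\text{so}}\ge\mu_1 c_c$; at equality one is cost-indifferent. Finally I would check the edge case $t^*=0$: when $(\mu_1+\mu_2)c_p^{\text{uso}}<\mu_1 c_c$ as well, the logarithm argument is $\ge 1$ only if $c_p^{\text{so}}\ge c_p^{\text{uso}}$, contradicting the hypothesis, so in fact the argument is $<1$, the logarithm is negative, $t^*=0$, and one replaces at every unscheduled opportunity — recovering the conclusion of \Cref{thm:opt1} in the limit $c_p^{\text{so}}=c_p^{\text{uso}}$. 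The main obstacle I anticipate is the bookkeeping around the switching point: one must verify that $F_1-F_2$ is genuinely monotone increasing on each piece, that the two ODE solutions match $C^0$ (and in fact $C^1$ fails across $t^*$, which is fine), and that the global boundary condition at $t=0$ together with the self-consistency of the ``replace at SO'' assumption forces exactly the claimed $t^*$ rather than some spurious interior crossing; this is the same ``extend to $[0,\tau]$ and derive a contradiction'' device as in \Cref{thm:opt1}, but now applied twice, once on each side of $t^*$.
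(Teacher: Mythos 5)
Your overall strategy is exactly the one the paper intends: its proof of \Cref{thm:opt2} is omitted with the remark that it is identical in structure to that of \Cref{thm:opt1}, split into three cases according to where $F_1(\tau)-F_2(\tau)$ falls relative to $c_p^{\text{so}}$ and $c_p^{\text{uso}}$, and your two-regime equation for $F_1-F_2$ (relaxation rate $\mu_1+\mu_2+\lambda$ with forcing $\lambda c_p^{\text{uso}}+\mu_1 c_c$ on the replace region, rate $\mu_1+\mu_2$ with forcing $\mu_1 c_c$ on the do-nothing region) combined with the boundary value $F_1(0)-F_2(0)=c_p^{\text{so}}$ is precisely the computation that produces $t^*$. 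Two local slips should be repaired. First, the solution of the do-nothing ODE started at $c_p^{\text{so}}$ is
\begin{equation*}
F_1(t)-F_2(t)=\frac{\mu_1 c_c}{\mu_1+\mu_2}+\left(c_p^{\text{so}}-\frac{\mu_1 c_c}{\mu_1+\mu_2}\right)e^{-(\mu_1+\mu_2)t},
\end{equation*}
with a \emph{negative} exponent (the trajectory relaxes towards the equilibrium $\mu_1 c_c/(\mu_1+\mu_2)$, consistent with your own ``increases towards equilibrium'' description); the equation you display carries $e^{+(\mu_1+\mu_2)t}$ and is inconsistent with the $t^*$ you then state --- only the corrected version yields $t^*=\frac{1}{\mu_1+\mu_2}\ln\frac{(\mu_1+\mu_2)c_p^{\text{so}}-\mu_1 c_c}{(\mu_1+\mu_2)c_p^{\text{uso}}-\mu_1 c_c}$. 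Second, your closing sign analysis is backwards and contradicts your own derivation: when $(\mu_1+\mu_2)c_p^{\text{uso}}<\mu_1 c_c$, both numerator and denominator of the logarithm's argument are negative and, since $c_p^{\text{so}}<c_p^{\text{uso}}$, the numerator is larger in absolute value, so the argument is strictly \emph{greater} than $1$ and $t^*>0$ (one waits for the cheaper scheduled replacement when it is imminent); your conclusion that ``the argument is $<1$, the logarithm is negative, $t^*=0$'' would collapse the policy to that of \Cref{thm:opt1} and is refuted by the paper's own numerics ($t^*\approx 1.6,\,1.27,\,0.63$ in Table~\ref{table:longrun}). The regime with no replacement at unscheduled opportunities (case (ii)) instead arises when $(\mu_1+\mu_2)c_p^{\text{uso}}\ge\mu_1 c_c$, or when the computed crossing time exceeds $\tau$, since then the increasing trajectory never reaches the level $c_p^{\text{uso}}$ on $[0,\tau]$. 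Neither slip affects the architecture of the argument, which matches the paper's.
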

\noindent \begin{proof}
Similarly to the proof of \Cref{thm:opt1}, we need to make certain assumptions here regarding the actions at the given opportunities. In particular, we distinguish three cases, each corresponding to a different set of actions.
{\em Case (i):} $F_1(\tau)-F_2(\tau)> c_p^{\text{uso}}$; {\em Case (ii):} $c_p^{\text{uso}}\geq F_1(\tau)-F_2(\tau)> c_p^{\text{so}}$; {\em Case (iii):} $F_1(\tau)-F_2(\tau)\leq c_p^{\text{so}}$.
The proof of this theorem is identical in structure to the proof of Theorem \ref{thm:opt1} and for this reason it is omitted.

Schematic representations of the function $F_1(t)-F_2(t)$ for cases (i) and (ii) are depicted in Figures \ref{Fig:4a} and \ref{Fig:4b}, respectively.
\begin{figure}[h!]
    \centering
    \begin{subfigure}[b]{0.48\textwidth}
       \includegraphics[width=\textwidth]{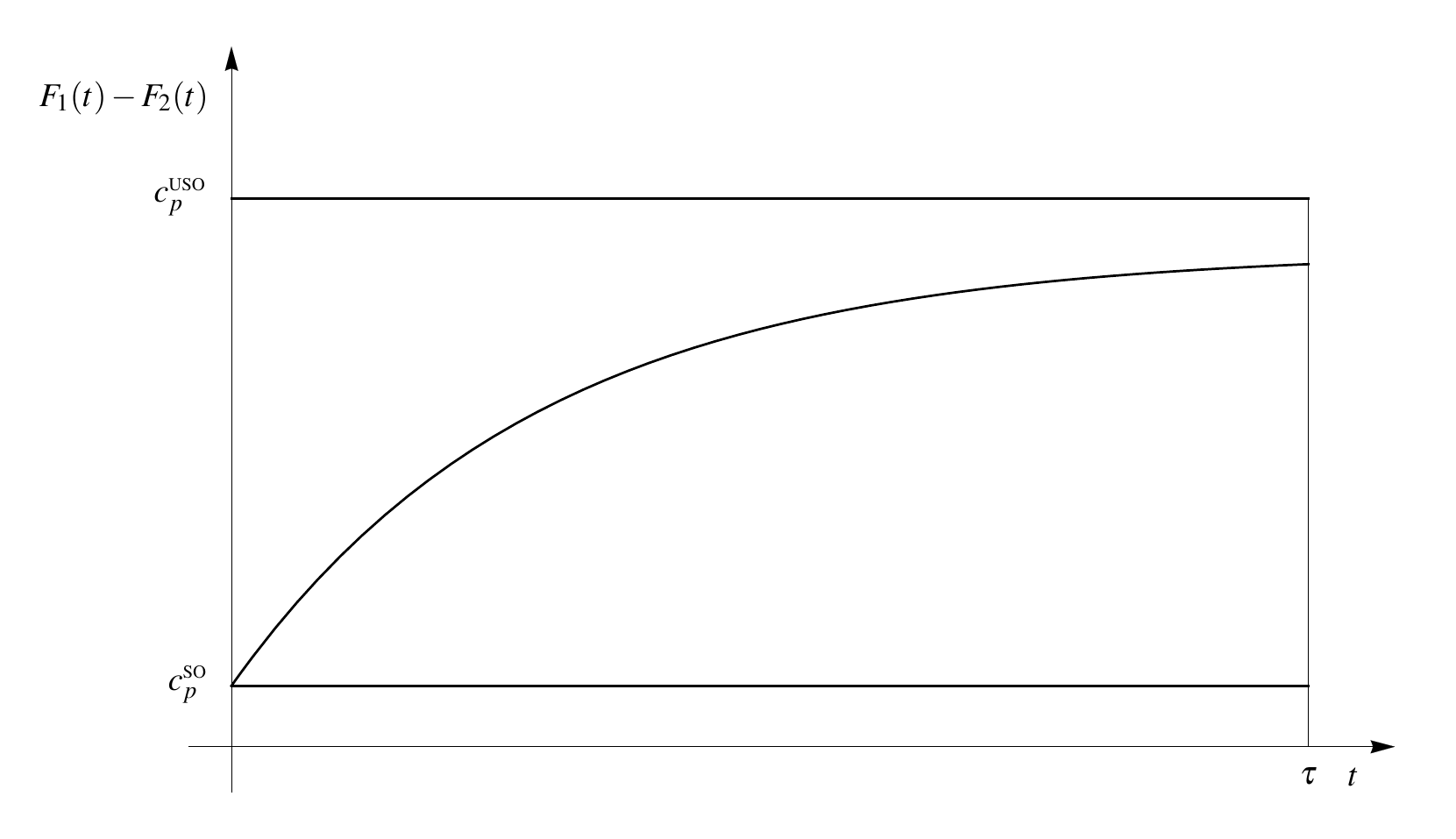}
        \caption{Case (i): The case of replacing a component at \\ scheduled  and  unscheduled opportunities in $[t^*,\tau)$.}
		\label{Fig:4a}
    \end{subfigure}\!\!
    \begin{subfigure}[b]{0.48\textwidth}
        \includegraphics[width=\textwidth]{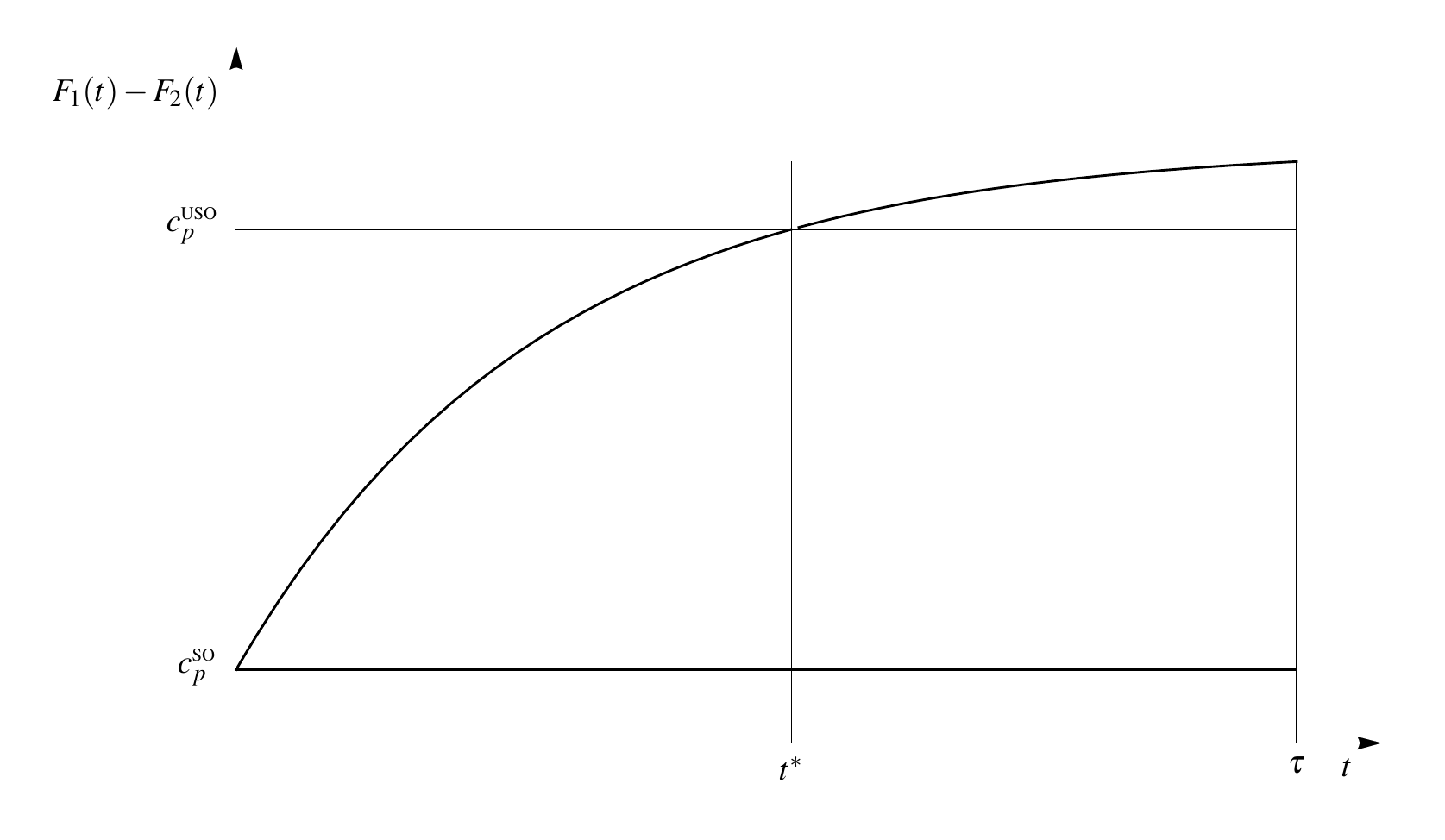}
        \caption{Case (ii): The case of replacing a component at only scheduled opportunities.}
        \label{Fig:4b}
    \end{subfigure}
\caption{Cases (i) and (ii) of Theorem \ref{thm:opt2}.}
\end{figure}\
\end{proof}

\section{Derivation of the long-run average cost}\label{sec:longruncost}
In the previous section, we derived the optimal long-run average cost policy. In this section, for any given time dependent control limit policy, we calculate the  long-run average cost per time unit  in Theorem \ref{them:costs}.

\begin{theorem}\label{them:costs}
Consider a given policy that states that in state $2$ we do nothing, and in state $1$ we replace at scheduled opportunities and at unscheduled opportunities for which the remaining time until the next scheduled opportunity is greater than $\tilde{t}\in(0,\tau)$,
and we do nothing otherwise. Under this given policy, the long-run average cost is
\begin{align}
&\frac{1}{\tau}\left[
c_c\mu_1  \del{\tilde{t}\frac{ \mu_2}{\mu_1+\mu_2}+\del{\frac{\mu_2}{\lambda+\mu_1+\mu_2}-\frac{\mu_2}{\mu_1+\mu_2} - \frac{\mu_2}{\lambda+\mu_1+\mu_2} e^{\del{\lambda+\mu_1+\mu_2}(\tilde{t}-\tau)}}\frac{1}{\mu_1+\mu_2}\del{1-e^{-(\mu_1+\mu_2)\tilde{t}}}}\right.
\nonumber\\
&\quad +\del{c_{p}^{USO}\lambda+c_{c}\mu_1}\frac{\mu_2}{\lambda+\mu_1+\mu_2}\del{\tau-\tilde{t}-\frac{1}{\lambda+\mu_1+\mu_2}\del{1-e^{\del{\lambda+\mu_1+\mu_2}\del{\tilde{t}-\tau}}}} \nonumber\\
&\quad \left.
+c_p^{\text{so}}\del{\frac{\mu_2}{\mu_1+\mu_2} + \del{\frac{\mu_2}{\lambda+\mu_1+\mu_2}-\frac{\mu_2}{\mu_1+\mu_2}-\frac{\mu_2}{\lambda+\mu_1+\mu_2}e^{\del{\lambda+\mu_1+\mu_2}(\tilde{t}-\tau)}}e^{-(\mu_1+\mu_2)\tilde{t}}}
\right].
\label{eq:replaceT}
\end{align}
\end{theorem}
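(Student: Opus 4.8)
The plan is to apply the renewal--reward theorem, using the scheduled opportunities as regeneration epochs. The key observation is that under the stated policy the component is \emph{always} in the perfect state $2$ immediately after a scheduled opportunity: if it was in state $1$ it is replaced, and if it was already in state $2$ nothing is done; in either case, by the memorylessness of the $\mathrm{Exp}(\mu_2)$ sojourn in state $2$ (and of the Poisson stream of unscheduled opportunities), the evolution after the scheduled opportunity is stochastically independent of the past and distributed as a fresh cycle. Hence the epochs $\tau,2\tau,\dots$ are regeneration points, the cycle length is the deterministic constant $\tau$, and the long-run average cost equals $\tfrac1\tau\,\mathbb{E}[\text{cost incurred in one cycle}]$, where the cycle starts with the component in state $2$ and ends immediately after the next scheduled opportunity (to which the possible $c_p^{\text{so}}$ charge is attributed).

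Next I would track $p_2(s)$, the probability that the component is in state $2$ at time $s$ into the cycle, $s\in[0,\tau]$. On $[0,\tau-\tilde{t})$ an unscheduled opportunity found in state $1$ triggers a replacement, so the state leaves $1$ at the combined rate $\mu_1+\lambda$ (a failure at rate $\mu_1$, charged $c_c$, or a ``useful'' USO at rate $\lambda$, charged $c_p^{\text{uso}}$) and in either case returns to $2$, while it leaves $2$ at rate $\mu_2$; on $[\tau-\tilde{t},\tau)$ unscheduled opportunities are ignored, so the state leaves $1$ only through a failure at rate $\mu_1$. Thus $p_2$ solves the piecewise linear ODE $p_2'(s)=-\mu_2 p_2(s)+(\mu_1+\lambda)\del{1-p_2(s)}$ on $[0,\tau-\tilde{t})$ and $p_2'(s)=-\mu_2 p_2(s)+\mu_1\del{1-p_2(s)}$ on $[\tau-\tilde{t},\tau)$, with $p_2(0)=1$ and continuity at $s=\tau-\tilde{t}$; solving the two first-order linear equations and matching at the interface yields explicit formulas for $1-p_2(s)$ on each piece and for $1-p_2(\tau)$.

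Then I would decompose the cycle cost into its three sources and express each via the state process: corrective costs $c_c$ accrue at rate $\mu_1$ while the component is in state $1$, over all of $[0,\tau)$; preventive USO costs $c_p^{\text{uso}}$ accrue at rate $\lambda$ while in state $1$ over $[0,\tau-\tilde{t})$; and a preventive SO cost $c_p^{\text{so}}$ is charged at time $\tau$ iff the component is then in state $1$. Since the expected number of transitions of a given type equals the time integral of (rate $\times$ probability of being in the originating state), this gives
\[
\mathbb{E}[\text{cycle cost}]=c_c\mu_1\int_0^{\tau}\del{1-p_2(s)}\dif s+c_p^{\text{uso}}\lambda\int_0^{\tau-\tilde{t}}\del{1-p_2(s)}\dif s+c_p^{\text{so}}\del{1-p_2(\tau)}.
\]
Substituting the closed forms from the previous step, carrying out the elementary integrations, and regrouping as the $c_c\mu_1$-contribution on $[\tau-\tilde{t},\tau)$, the $(c_c\mu_1+c_p^{\text{uso}}\lambda)$-contribution on $[0,\tau-\tilde{t})$, and the $c_p^{\text{so}}$ boundary term, reproduces the three bracketed lines of \eqref{eq:replaceT} after division by $\tau$.

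The conceptual crux is the second step: recognizing that the component's state dynamics collapse, on each sub-interval, to a two-state process in which every exit from state $1$ returns to state $2$ but is ``priced'' differently ($c_c$ for a failure versus $c_p^{\text{uso}}$ for a useful USO), and that only USOs before time $\tau-\tilde{t}$ are useful. Everything after that is routine; the only real labour is the algebraic tidying of the interface-matched solution — in particular rewriting the constant $\tfrac{\mu_1}{\mu_1+\mu_2}-p_2(\tau-\tilde{t})$ as $\del{\tfrac{\mu_2}{\lambda+\mu_1+\mu_2}-\tfrac{\mu_2}{\mu_1+\mu_2}}-\tfrac{\mu_2}{\lambda+\mu_1+\mu_2}e^{(\lambda+\mu_1+\mu_2)(\tilde{t}-\tau)}$ so that the output matches the stated form (a useful check is that letting $\tilde{t}\downarrow0$ or $\tilde{t}\uparrow\tau$ recovers the ``replace at every USO'' and ``replace only at SOs'' costs). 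A reader preferring the SMDP formalism of Proposition~\ref{thm:Puterman} could instead compute the stationary distribution of the embedded chain together with the per-state mean holding times and costs, obtaining the same expression at the price of a heavier computation.
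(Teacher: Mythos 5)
Your proposal is correct and follows essentially the same route as the paper: renewal--reward with the scheduled opportunities as regeneration epochs, a piecewise two-state ODE for the occupancy probabilities (yours in forward cycle time $s$, the paper's in the residual time $t=\tau-s$ until the next scheduled opportunity, with the matching boundary condition $p_1\to 0$ just after an SO and continuity at the switch point), and the identical cost decomposition $c_c\mu_1\int_0^{\tau}p_1+c_p^{\text{uso}}\lambda\int p_1$ over the active USO window plus the boundary term $c_p^{\text{so}}p_1$ at the SO. The only difference is the direction of the time parametrization, which is immaterial.
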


\noindent \begin{proof}
For the calculation of the long-run average cost, we employ techniques from renewal-reward theory. To this purpose, we consider as renewal epochs the instants of the scheduled opportunities, thus, the inter-renewal times are deterministic and equal to $\tau$. For the determination of the average cost during a renewal cycle, we define the following probabilities 
\begin{equation}\label{eq:propbsilam}
p_i\del{t}=\pr{\text{machine is in state }i\mid\text{time until next scheduled opportunity is }t },
\end{equation}
where $t\in[0,\tau)$. Then, the average cost in a cycle can be obtained as 
\begin{equation*}
c_p^{\text{so}}p_1(0)+\del{c_p^{\text{uso}}\lambda+c_c \mu_1}\int_{\tilde{t}}^{\tau}p_1(t)\dif t + c_c \mu_1 \int_{0}^{\tilde{t}} p_1(t) \dif t,
\end{equation*}
from which, once we calculate the unknown probability $p_1\del{t}$, $t\in[0,\tau)$, we can immediately derive the long-run average cost by simply dividing by  $\tau$. \\
The probabilities in \cref{eq:propbsilam} satisfy the following system of differential equations
\begin{align}
p_1'\del{t} =& \mu_1 p_1\del{t} - \mu_2 p_2\del{t}\ \text{ and }\ p_2'\del{t} = \mu_2 p_2\del{t} -\mu_1 p_1\del{t}, \ t\in[0,\tilde{t})\label{eq:p1>t(1)}\\
p_1'\del{t} =& \del{\lambda+\mu_1} p_1\del{t} - \mu_2 p_2\del{t}\ \text{ and } \ 
p_2'\del{t} = \mu_2 p_2\del{t} - \del{\lambda+\mu_1} p_1\del{t}, \ t\in[\tilde{t},\tau). \label{eq:p1>t(2)}
\end{align}
Solving \eqref{eq:p1>t(1)}-\eqref{eq:p1>t(2)}, knowing that $p_1(t)+p_2(t)=1$ for all $t$,  $\lim\limits_{t\to\tau^-}p_1(t)=0$ and $\lim\limits_{t\to\tilde{t}^-}p_i(t)=\lim\limits_{t\to\tilde{t}^+}p_i(t)$, $i=1,2$, i.e. they are continuous functions, we obtain
\begin{align*}
p_1(t)=&\frac{\mu_2}{\mu_1+\mu_2} + \del{\frac{\mu_2}{\lambda+\mu_1+\mu_2}-\frac{\mu_2}{\mu_1+\mu_2}-\frac{\mu_2}{\lambda+\mu_1+\mu_2}e^{\del{\lambda+\mu_1+\mu_2}(\tilde{t}-\tau+\frac{\Lambda(t)}{\lambda}(t-\tilde{t}))}}e^{\frac{\lambda-\Lambda(t)}{\lambda}(\mu_1+\mu_2)(t-\tilde{t})},
\end{align*}
where
\begin{align*}
\Lambda(t)=\begin{cases}
0,&\text{ if } \ 0\leq t<\tilde{t},\\
\lambda,&\text{ if }\ \tilde{t}\leq t<\tau.
\end{cases}
\end{align*}
\end{proof}

\subsection{Special cases}
In case of only {\em scheduled opportunities}, which corresponds to the case  $\tilde{t}\to\tau$ or equivalently  to the case $\lambda\to0$, the long-run expected cost equals $$\frac{1-e^{-(\mu_1+\mu_2)\tau}}{(\mu_1+\mu_2)\tau}\mu_2\del{c_p^{\text{so}}-c_c\frac{\mu_1}{\mu_1+\mu_2}}+c_c\frac{\mu_1\mu_2}{\mu_1+\mu_2}.$$

In case of only {\em unscheduled opportunities}, which corresponds to the case  $\tau\to\infty$, the condition of the component can be fully described using a double descriptor
$\mathcal{S}=\left\{(i,j):\ i\in \{1,2\}, \ j\in\{\text{SC},\text{USO}\}\right\}$ which is independent of time, and thus the new model formulation falls into the framework of regular MDPs. It can be easily shown that:
For state $2$ the optimal policy is to do nothing. For state 1 the optimal policy is  to replace if
$(\mu_1+\mu_2)c_p^{\text{uso}} < \mu_1 c_c$
and to do nothing otherwise. Furthermore, under the optimal policy the  average long-run cost is equal to
 $$ \frac{c_p^{\text{uso}}\lambda\mu_2+c_c\mu_1\mu_2}{\lambda+\mu_1+\mu_2}.$$

In case of only {\em corrective replacements}, the long-run average cost is equal to
 $$c_c\frac{\mu_1\mu_2}{\mu_2+\mu_1}.$$

\section{Numerical results}\label{sec:numstud}
As a small numerical example, we examine the effect of $\tau$, $\lambda$ and $c_p^{\text{so}}$ on the long-run average cost, while keeping all other parameters fixed. We choose $c_c=15000$, $c_p^{\text{uso}}=10000$, $\mu_2=.4$ and $\mu_1=1$. In this case, the long-run average cost in case of only corrective replacements is equal to $4285.71$. In Table 1, we show the long-run average cost for various choices of $\lambda$, $\tau$ and $c_p^{\text{so}}$ for three different policies:
the optimal policy ($a_{\text{opt}}$), the policy that we replace at only the scheduled opportunities ($a_{\text{so}}$), and finally,
the policy under which we always replace at all opportunities ($a_{\text{alw}}$). It seems that the use of the proposed policies can considerably improve upon the long-term average cost when compared to performing only corrective replacements. However, in most of the numerical scenarios we performed, as long as the choice of the parameters is reasonable, we did not notice considerable differences between the three policies, viz. $a_{\text{opt}},\ a_{\text{so}},\ a_{\text{alw}}$.

\begin{table}[h]
	\centering
	 \begin{tabular}{|c|cc|ccc|ccc|}
			\hline  \multicolumn{9}{|c|}{$c_p^{SD}=4000$ and $t^*\approx1.6$}
			\\ \hline			
			 \multicolumn{3}{|c|}{$\tau=1$} & \multicolumn{3}{c|}{$\tau=2$} & \multicolumn{3}{c|}{$\tau=4$}    \\  \hline
			$\lambda$ & $a_{\text{opt}}=a_{\text{so}}$ & $a_{\text{alw}}$ & $a_{\text{opt}}$ & $a_{\text{so}} $& $a_{\text{alw}}$ & $a_{\text{opt}}$ & $a_{\text{so}}$& $a_{\text{alw}}$  \\ \hline
			0.1 & 2840.41 & 2885.56 & 3384.70 & 3384.86 & 3422.03 & 3802.49 & 3807.90 & 3823.32 \\
			0.5 & 2840.41 & 3042.07 & 3384.09 & 3384.86 & 3538.91 & 3784.63 & 3807.90 & 3867.21 \\			
			1 & 2840.41 & 3194.24 & 3383.38	 & 3384.86 & 3636.35 & 3768.42 & 3807.90 & 3899.32 \\			
			2 & 2840.41 & 3401.88 & 3382.15 & 3384.86 & 3747.82 & 3747.68 & 3807.90 & 3932.53 \\ \hline
			 \multicolumn{9}{|c|}{$c_p^{SD}=6500$ and $t^*\approx1.27$}  \\ \hline
			\multicolumn{3}{|c|}{$\tau=1$} & \multicolumn{3}{c|}{$\tau=2$} & \multicolumn{3}{c|}{$\tau=4$}    \\  \hline
			$\lambda$ & $a_{\text{opt}}=a_{\text{so}}$ & $a_{\text{alw}}$ & $a_{\text{opt}}$ & $a_{\text{so}} $& $a_{\text{alw}}$ & $a_{\text{opt}}$ & $a_{\text{so}} $& $a_{\text{alw}}$  \\ \hline
			0.1  & 3378.56 & 3403.48 & 3719.49 & 3720.28 & 3738.77 & 3979.00 & 3985.81 & 3989.58 \\
			0.5 & 3378.56 & 3489.66 & 3716.57 & 3720.28 & 3796.18 & 3956.81 & 3985.81 & 3998.72 \\
			1 & 3378.56 & 3573.11 & 3713.40 & 3720.28 & 3842.96 & 3937.06 & 3985.81 & 4003.48 \\
			2 & 3378.56 & 3686.18 & 3708.32 & 3720.28 & 3894.71 & 3912.27 & 3985.81 & 4006.06 \\ \hline
			 \multicolumn{9}{|c|}{$c_p^{SD}=9000$ and $t^*\approx0.63$}   \\ \hline
			\multicolumn{3}{|c|}{$\tau=1$} & \multicolumn{3}{c|}{$\tau=2$} & \multicolumn{3}{c|}{$\tau=4$}    \\  \hline
			$\lambda$ & $a_{\text{opt}}=a_{\text{so}}$ & $a_{\text{alw}}$ & $a_{\text{opt}}$ & $a_{\text{so}} $& $a_{\text{alw}}$ & $a_{\text{opt}}$ & $a_{\text{so}} $& $a_{\text{alw}}$  \\ \hline
			0.1 & 3792.57 & 3797.66 & 3916.43 & 3916.70 & 3921.39 & 4052.18 & 4055.71 & 4055.51  \\
			0.5 & 3792.57 & 3816.41 & 3915.40 & 3916.70 & 3937.26 & 4039.84 & 4055.71 & 4053.46   \\
			1 & 3792.57 & 3836.68 & 3914.21 & 3916.70 & 3951.98 & 4027.59 & 4055.71 & 4049.58   \\
			2 & 3792.57 & 3868.78 & 3912.11 & 3916.70 & 3970.48 & 4010.20 & 4055.71 & 4041.61  \\ \hline		
		\end{tabular}
		\caption{{Long-run average cost varying  $\lambda$, $\tau$ and $c_p^{\text{so}}$, while keeping  all other parameters fixed for three policies.}}
		\label{table:longrun}		
	\end{table}

\section{Conclusions and future work}\label{sec:concl}
In this paper, we derived the optimal replacement policy for a 3-state component degrading over time with corrective replacements at failures and preventive replacements at both scheduled and unscheduled opportunities. We also explicitly calculated the long-run average cost for certain time-dependent policies. In future work, we want to extend our analysis to models of components with more than three states, to models with additional features such as repairs, and to models in which the condition of the component is only partially observable.

\section*{Acknowledgments}
\noindent The work of Sz. K\'alosi is supported by the Data Science Flagship framework, a cooperation between the Eindhoven University of Technology and Philips.
The work of S. Kapodistria is supported by an NWO Gravitation Project, NETWORKS, and a TKI Project, DAISY4OFFSHORE. The authors would like to thank M. Barbieri, J. Korst, and V. Pronk (all Philips Research) and O. J. Boxma (Eindhoven University of Technology) for their time and advice in the preparation of this work.

\end{document}